\documentclass[11pt]{amsart}
\usepackage{amssymb,amsxtra,amsmath,amsfonts, accents}
\usepackage{verbatim,enumerate}
\usepackage{graphicx,color,graphics} 
\usepackage[all]{xy}
\usepackage{braket}
\usepackage[colorlinks = true,
  linkcolor = blue,
 urlcolor  = green,
citecolor = red,
anchorcolor = blue]{hyperref}
\usepackage{comment}

\usepackage{mathtools}
\usepackage{tikz}
\usetikzlibrary{chains}

\tikzset{node distance=2em, ch/.style={circle,draw,on chain,inner sep=2pt},chj/.style={ch,join},every path/.style={shorten >=4pt,shorten <=4pt},line width=1pt,baseline=-1ex}

\usetikzlibrary{decorations.markings}

\tikzset{
  midarrow/.style={
    postaction={decorate},
    decoration={markings, mark=at position 0.5 with {\arrow{>}}}
  }
}

\newcommand{\alabel}[1]{}

\newtheorem{theorem}{Theorem}
\newtheorem{lemma}[theorem]{Lemma}
\newtheorem{proposition}[theorem]{Proposition}

\theoremstyle{definition}
\newtheorem{definition}[theorem]{Definition}
\newtheorem{example}[theorem]{Example}

\theoremstyle{remark}

\numberwithin{equation}{section}
\numberwithin{theorem}{section}

\renewcommand{\comment}[1]{}
\def\CC{\mathbb{C}}

\def\V{\mathcal{V}}

\def\C{\mathbb{C}}

\def\M{\mathcal{M}}

\def\T{\mathcal{T}}

\def\ZZ{\mathbb{Z}}

\DeclareMathOperator\ad{ad}

\DeclareMathOperator\Tr{Tr}

\DeclareMathOperator\Span{span}

\def\lieg{{\mathfrak{g}}}
\def\lieh{{\mathfrak{h}}}

\makeindex

\title[Lie conformal algebras and one loop corrections of amplitudes]{Non-linear Lie conformal algebras and one-loop corrections of self-dual Yang-Mills amplitudes }
\author{Charles Igel, Jeremy Mancinas and Juan Villarreal}

\begin{document}

\maketitle
\begin{abstract} This work is motivated by recent developments in celestial holography. In \cite{CP}, the authors interpreted QCD collinear singularities in terms of operator product expansions in a two-dimensional CFT.  We reformulate the algebraic structures arising in their work using the formalism of non-linear Lie conformal algebras developed in \cite{SK}.
\end{abstract}
\section{Introduction}

Recently, new examples of infinite-dimensional algebras have been appearing in the physics literature on the celestial holography program \cite{ S, PPR, GHPS, CP}, see also \cite{BHPZ, ABC+}. In particular, an approach based on twistor theory \cite{W,  CP, CP2, C, B} explains how conformal blocks of certain vertex algebras give rise to expressions equivalent to scattering amplitudes \cite{PT, BDDK}. These striking relations have motivated us to study such algebras from a mathematical perspective using the theory of non-linear Lie conformal algebras \cite{SK}.

In this work, we consider the vertex algebra $\V^{\beta}(\lieg)$ studied in \cite{CP}, defined in \eqref{eq v}, where $\mathfrak{g}$ is a finite-dimensional simple complex Lie algebra and $\beta \in \C$. This vertex algebra has infinitely many generators, labeled as follows:
\begin{equation}\label{eq: gen}
\{J_a[n,m], I_a[n,m], E[n,m], F[n,m]\}_{a \in \mathfrak{g},\, n,m \geq 1}.
\end{equation}
where $E[0,0]=0$. In \cite{CP2}, the authors, motivated by the theory of collinear limits in one-loop QCD scattering amplitudes, deform the vertex algebra $\V^{\beta}(\lieg)$ by introducing, in particular, a set of parameters $D, C \in \CC$ and relations \eqref{eq v2} on a finite subset of generating fields given by
\begin{equation}\label{eq: gen2}
\{J_a[n,m], I_a[n,m]\}_{a \in \mathfrak{g},\, n+m \leq 1}\, . 
\end{equation}

In the present work, we reformulate those constructions in the language of non-linear Lie conformal algebras. Additionally, we have removed some constants which are physically motivated in their work.  In this work, we recover a main result of \cite{CP}, which is our main result and we state it as follows.

\begin{theorem} \label{theorem: 1.1}The following Jacobi identity is satisfied
  \[[J_a[1,0]_\lambda [J_b[0,1]_\mu J_c[0,0]]] =[J_b[0,1]_\mu [J_a[1,0]_\lambda J_c[0,0]]]+[[J_a[1,0]_\lambda J_b[0,1]]_{\lambda+\mu}J_c[0,0]]\]
     if and only if $\beta=D=C=0$ or $\lieg\in\{A_1, A_2, D_4,  E_6, E_7, E_8, F_4, G_2\}$ and
     \begin{align*}
  D=\frac{-\beta^2(2+\dim \lieg )}{20h^{\vee}}\, ,  \qquad C=\frac{3\beta^2(2+\dim \lieg)}{20(h^{\vee})^2}\, .
\end{align*}
\end{theorem}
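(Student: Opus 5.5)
The plan is to compute all three terms of the Jacobi identity explicitly from the defining $\lambda$-brackets \eqref{eq v} and their deformation \eqref{eq v2}. I then reduce the difference of the two sides to a finite list of invariant tensors on $\lieg$, and read off when it vanishes. I will use the non-linear Lie conformal algebra formalism of \cite{SK}: sesquilinearity, skew-symmetry, and the non-commutative Wick formula. These let me expand brackets with normally ordered products, such as the quadratic terms $:J J:$ and $:J I:$ in the deformed brackets, and the terms proportional to $D$ and $C$.

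First I would record the brackets that are needed:
\begin{itemize}
\item $[J_a[1,0]_\lambda J_c[0,0]]$ and $[J_b[0,1]_\mu J_c[0,0]]$, which are undeformed, essentially $f_{ac}{}^{d}J_d[1,0]$ and $f_{bc}{}^{d}J_d[0,1]$;
\item $[J_a[1,0]_\lambda J_b[0,1]]$, which carries the deformation. Besides $f_{ab}{}^{d}J_d[1,1]$, it contains a quadratic term of the form $\beta\, K^{de}\,{:}J_dJ_e{:}$ contracted with structure constants, a term in $D$ times a symmetric invariant tensor contracted with $I$-type fields, and a term in $C$ times $\partial J$ or $J$.
\end{itemize}
Next I would compute the double brackets. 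The linear parts reproduce the ordinary Jacobi identity of $\lieg\otimes\C[z,w]$ and cancel. What remains are:
\begin{itemize}
\item terms coming from the Wick formula acting on $[{:}J_dJ_e{:}_{\lambda+\mu}J_c[0,0]]$, which produce cubic contractions $f f f$ together with the integral term giving $\lambda$- and $\mu$-polynomial corrections;
\item the $D$ and $C$ terms bracketed with $J_c$.
\end{itemize}
I would organize everything by monomials in $\lambda,\mu$ and by field type ($J_c$-linear in $J[0,0]$, and so on). The equation then becomes a set of tensor identities in $\Hom(\lieg^{\otimes 3},\lieg)$.

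The key reduction is to express the surviving tensors through standard invariants. The main one is the totally symmetrized quartic $\Tr_{\ad}(X_{(a}X_bX_cX_{d)})$, or equivalently its contraction with $f$. I would then use the known fact that
\[
\Tr_{\ad}(x^4)=\frac{5(h^\vee)^2}{2+\dim\lieg}\,\bigl(x,x\bigr)^2\cdot(\text{normalization})
\]
holds exactly for the exceptional series $A_1,A_2,D_4,G_2,F_4,E_6,E_7,E_8$, which have no independent quartic Casimir. This is the step I expect to be the main obstacle: showing that the anomalous term is proportional to the traceless part of the quartic adjoint trace. I would try first to contract everything with the Killing form and test on $x=y=z$ in a Cartan element, comparing with Deligne–Vogel/Okubo formulas. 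Once that is done, for the exceptional series the residual reduces to scalar multiples of $(\cdot,\cdot)f$ and $\delta$-tensors. Solving the resulting linear system for $D,C$ gives $D=-\beta^2(2+\dim\lieg)/(20h^\vee)$ and $C=3\beta^2(2+\dim\lieg)/(20(h^\vee)^2)$. The factor $20=4\cdot 5$ comes from the $5/(2+\dim\lieg)$ constant.

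For other $\lieg$, the traceless quartic tensor is nonzero and cannot be cancelled by the $D,C$ terms, which only involve quadratic invariants. Hence one must have $\beta=0$, which then forces $D=C=0$. Conversely, both families of parameters satisfy all the coefficient equations, which proves the equivalence.
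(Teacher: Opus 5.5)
There is a genuine gap. Your strategy (expand the three terms, let the linear parts cancel, reduce the remainder to invariant tensors) is the right kind of argument, but the brackets you plan to expand are not those of \eqref{eq v2}, and this breaks the core of the proof.

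\textbf{The brackets you describe are wrong.} The brackets $[J_a[1,0]_\lambda J_c[0,0]]$ and $[J_b[0,1]_\mu J_c[0,0]]$ are \emph{not} undeformed. They contain $-\beta(a,c)(\lambda+\partial)E[1,0]$ and $-\beta(b,c)(\mu+\partial)E[0,1]$. In $[J_a[1,0]_\lambda J_b[0,1]]$, the $\beta$-term is linear in $E[1,1]$ and $F[0,0]$; there is no $\beta\,{:}JJ{:}$ term. The parameter $D$ multiplies $(2\lambda+\partial)I_{[a,b]}[0,0]$, and the only non-linear term is the $C$-term $J_{[a,e_i]}[0,0]I_{[b,e^i]}[0,0]+J_{[b,e_i]}[0,0]I_{[a,e^i]}[0,0]$.

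\textbf{Consequences.} The $\beta^2$-contribution $\beta^2(\lambda+\mu+\partial)\bigl((b,c)I_a+(a,c)I_b+(a,b)I_c\bigr)[0,0]$ comes from pairing these $E,F$-terms with brackets from Proposition \ref{pro3.2}: $[J_a[1,0]_\lambda E[0,1]]=\beta I_a[0,0]$, $[J_b[0,1]_\mu E[1,0]]=-\beta I_b[0,0]$ and $[J_c[0,0]_\lambda F[0,0]]=-\beta\lambda I_c[0,0]$. With your description no $\beta^2$ can arise at all, so your linear system could never give $D,C\propto\beta^2$. The roles of the invariants are also reversed. The $\beta^2$-terms contribute only the symmetrized product of pairings. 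The quartic traces come from the $D$-terms, via $(x,[a,[b,c]])=\frac{1}{2h^\vee}\Tr([\ad_x,\ad_a][\ad_b,\ad_c])$, and from the integral term of the Wick formula for the $C$-term, via Lemma \ref{lem4.2}, part (1). So your closing argument that ``$D,C$ only involve quadratic invariants'' is backwards. You must also check that the $E[1,1]$, $F[0,0]$ terms and the quadratic $JI$ terms cancel; they do, by invariance of $(\cdot,\cdot)$.

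\textbf{The step you flag as the main obstacle} is resolved in the paper by a symmetry separation, not by evaluation on Cartan elements. Using Lemma \ref{lem4.2}, parts (1)--(3), the coefficient of $I_x[0,0]$ becomes
\[
\Bigl(C+\tfrac{3D}{h^\vee}\Bigr)\bigl(\Tr(\ad_a\ad_b\ad_c\ad_x)+\Tr(\ad_b\ad_a\ad_c\ad_x)\bigr)=\beta^2P+\tfrac{2D}{h^\vee}Q .
\]
Here $P=(b,c)(x,a)+(a,c)(x,b)+(a,b)(c,x)$ and $Q=\Tr(\ad_a\ad_b\ad_c\ad_x)+\Tr(\ad_a\ad_c\ad_x\ad_b)+\Tr(\ad_a\ad_x\ad_b\ad_c)$. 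Both $P$ and $Q$ are totally symmetric. The left-hand tensor is not symmetric under $b\leftrightarrow c$, which you can check on an $\mathfrak{sl}_2$-triple.

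\begin{itemize}
\item Antisymmetrizing in $b,c$ forces $C=-3D/h^\vee$, and then $\beta^2P=-\tfrac{2D}{h^\vee}Q$.
\item By polarization (Lemma \ref{lem4.2}, part (4)), for $\beta\neq0$ this is equivalent to $\Tr(\ad_x^4)=\alpha\Tr(\ad_x^2)^2$ with $D=-\beta^2/(8h^\vee\alpha)$. Then $\alpha=5/(2(2+\dim\lieg))$ gives the stated constants.
\item If no such $\alpha$ exists, $P$ and $Q$ are linearly independent, forcing $\beta=D=0$. If $\beta=0$, then $D=0$ because $Q\neq0$. In both cases $C=0$.
\end{itemize}

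Your proposed test at $a=b=c=x$ cannot replace this separation. On the diagonal the identity collapses to $2C\Tr(\ad_x^4)=3\beta^2(x,x)^2$, in which $D$ drops out entirely.
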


We remark that general recursive expressions for the brackets of all generators in \eqref{eq: gen} were obtained in \cite{FP}. In future work, we plan to reinterpret certain constructions in \cite{FP} and relate them to the existence theorem in \cite[Theorem 3.9]{SK}. Moreover, some of the algebras arising in celestial holography are more general than ordinary vertex algebras, as they exhibit logarithmic singularities \cite{BLRSY, BS, CB, Kr}. We hope to further connect these physical constructions with the theory of logarithmic vertex algebras developed in \cite{G, G2, GL, BV, BV2, BV3, V}.

\subsection*{Acknowledgments}
We thank the University of Colorado Boulder Department of Mathematics for hosting us during their 2025 summer REU. We are especially grateful for the funding, space, meals, and sense of community provided at the beginning of this project, all of which helped make this research possible. 

\section{Preliminaries}

In this work $\lieg$ denotes a finite-dimensional complex simple Lie algebra endowed with the bi-invariant pairing 
\begin{equation}\label{fq2.1} (a,b)=\frac{1}{2h^\vee}\Tr(\ad_{a}\ad_b)\, ,\end{equation}
 where $a,b\in\lieg$ and $h^\vee$ denotes the dual Coxeter number. For an introduction to Lie conformal algebras see \cite{K,A, AM}.

\begin{definition}
    A {Lie conformal algebra} is a $\mathbb{C}[\partial]$-module $R$ endowed with an $\C$-bilinear map $[\cdot_\lambda \cdot] : R \otimes R \rightarrow R [\lambda] $ satisfying
    \begin{enumerate}
        \item[] {Sesquilinearity}\,\,\, $[\partial a_\lambda b] = -\lambda [a_\lambda b], \hspace{.25cm}[a_\lambda \partial b] = (\lambda + \partial) [a_\lambda b]$,
        \item[] {Skew-symmetry} $[a_\lambda b] = -[b_{-\lambda-\partial} a]$,
        \item[] {Jacobi identity}\,\, $[a_\lambda [b_\mu c]] = [b_\mu [a_\lambda c]] + [[a_\lambda b]_{\lambda+\mu} c]$.
   \end{enumerate}
   where  $a,b,c\in R$.
\end{definition}

We have the following well-known example.
\begin{example}
    Let $\lieg$ be a Lie algebra. 
    The {affine conformal algebra} $R_{\lieg}=\C [T] \otimes \lieg$ is defined by the bracket
        \begin{equation}\label{eq1.1}
        [a_\lambda b] = [a,b] 
        \end{equation}
        for all $a,b\in \lieg$.       
\end{example}
We remark that the previous example has a central extension, but in this work we are mainly interested in \eqref{eq1.1}.


\begin{definition} A \emph{vertex algebra} is a quintuple $(V, \ket{0}, \partial, ., [\cdot_\lambda \cdot])$ satisfying
 \begin{itemize}
        \item[i)] $(V, \partial, [\cdot_\lambda \cdot])$ is a Lie conformal algebra.
        \item[ii)] $(V, \partial, .)$ is a quasicommutative, quasiassociative unital differential algebra, see \cite{K}
        \item[iii)] The product $.$ and bracket $ [\cdot_\lambda \cdot]$ are related by the noncommutative Wick formula
        \begin{equation}\label{eq2.0}
        [a_\lambda bc] = [a_\lambda b]c + b[a_\lambda c] + \int_0^\lambda [[a_\lambda b]_\mu c]d\mu
        \end{equation}
    \end{itemize}

\end{definition}

From a Lie conformal algebra $R$ there is a canonical way to construct a
vertex algebra which contains $R$, see \cite{K, BK}

\begin{theorem}\label{thm BK} Let $R$ be a Lie conformal algebra. Let $R_{\text{Lie}}$ be $R$ considered as a Lie algebra over $\mathbb{C}$ with respect to the Lie bracket
\[
[a, b] = \int^{0}_{-\partial}  [a_\lambda b]d\lambda \qquad a, b \in R,
\]
and let $V = U(R_{\text{Lie}})$ be its universal enveloping algebra. Then there exists a unique structure of a vertex algebra on $V$ such that the restriction of the $\lambda$-product to $R_{\text{Lie}} \times R_{\text{Lie}}$ coincides with the $\lambda$-bracket on $R$, and the restriction of the normally ordered product to $R_{\text{Lie}} \times V$ coincides with the associative product of $U(R_{\text{Lie}})$.
\end{theorem}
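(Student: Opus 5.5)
The statement we must prove is the last one shown, Theorem~\ref{thm BK}: the existence and uniqueness of a vertex algebra structure on $V=U(R_{\text{Lie}})$. I would follow the standard route of \cite{K, BK}. Write it in three steps: build fields, check locality, and invoke the existence theorem for vertex algebras generated by local fields.

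\textbf{Step 1: $R_{\text{Lie}}$ is a Lie algebra.} Skew-symmetry of the bracket $[a,b]=\int_{-\partial}^0[a_\lambda b]\,d\lambda$ follows from conformal skew-symmetry after the substitution $\lambda\mapsto -\lambda-\partial$. The Jacobi identity follows by integrating the conformal Jacobi identity over $\lambda,\mu$ and using sesquilinearity. It is cleaner to pass to the annihilation Lie algebra $\mathrm{Lie}(R)=R[t,t^{-1}]/(\partial+\partial_t)R[t,t^{-1}]$, with
\[
[a_{(m)},b_{(n)}]=\sum_j\binom{m}{j}(a_{(j)}b)_{(m+n-j)},
\]
where $[a_\lambda b]=\sum_j \lambda^j a_{(j)}b/j!$. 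This is a Lie algebra because $R$ is a Lie conformal algebra. One checks that $\mathrm{Lie}(R)=\mathrm{Lie}(R)_+\oplus\mathrm{Lie}(R)_-$, where $\mathrm{Lie}(R)_-$ is spanned by the $a_{(n)}$ with $n<0$. The map $a\mapsto a_{(-1)}$ identifies $R_{\text{Lie}}$ with $\mathrm{Lie}(R)_-$, and under it the bracket above corresponds exactly to $[a_{(-1)},b_{(-1)}]$.

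\textbf{Step 2: fields and locality.} Define $V=U(\mathrm{Lie}(R))\otimes_{U(\mathrm{Lie}(R)_+)}\C\cong U(\mathrm{Lie}(R)_-)=U(R_{\text{Lie}})$, using PBW for the second isomorphism. Set $\ket{0}=1$ and $a(z)=\sum_n a_{(n)}z^{-n-1}$. Each $a(z)$ is a field on $V$, because a given vector is killed by $a_{(n)}$ for $n\gg0$. This is proved by induction on PBW length, using that $[a_\lambda b]$ is polynomial in $\lambda$.

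The commutator formula shows that $(z-w)^{N}[a(z),b(w)]=0$ for $N$ larger than the degree of $[a_\lambda b]$ in $\lambda$. The operator $\partial$ is induced by $-\partial_t$, so $[\partial,a(z)]=\partial_z a(z)$ and translation covariance holds. Finally $a(z)\ket{0}|_{z=0}=a_{(-1)}\ket{0}=a$.

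\textbf{Step 3: existence and uniqueness.} The fields $\{a(z)\}$ are mutually local, translation covariant, and generate $V$ from $\ket{0}$, since PBW monomials are iterated $(-1)$-products applied to $\ket{0}$. The existence theorem of \cite{K} then gives a unique vertex algebra structure. In the $\lambda$-bracket/normally ordered product language of the Definition, this is equivalent to (i)--(iii). In this structure $[a_\lambda b]$ is recovered from $a_{(j)}b$ for $j\ge0$, which agrees with the given bracket. Also $a\,.\,v=a_{(-1)}v$, which is left multiplication in $U(R_{\text{Lie}})$. Uniqueness holds because any vertex algebra structure with these restrictions is determined on generators, and the Wick formula \eqref{eq2.0} together with quasiassociativity then determines it on all of $V$.

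\textbf{Main obstacle.} The hardest part is the identification in Step 1 between $\mathrm{Lie}(R)_-$ and $R_{\text{Lie}}$, together with the proof that $\mathrm{Lie}(R)_+$ is a subalgebra complementary to $\mathrm{Lie}(R)_-$. I would handle this by treating $\mathrm{Lie}(R)_-$ as a quotient of $R\otimes t^{-1}\C[t^{-1}]$ modulo the relations coming from $\partial+\partial_t$, and checking that it is isomorphic to $R$ as a vector space.
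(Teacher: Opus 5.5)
Your proposal is correct. The paper gives no proof of this theorem and simply cites \cite{K, BK}, and your argument is the standard one from those sources: realize $V$ as $U(\mathrm{Lie}(R))\otimes_{U(\mathrm{Lie}(R)_+)}\C\cong U(\mathrm{Lie}(R)_-)$, identify $\mathrm{Lie}(R)_-$ with $R_{\text{Lie}}$ via $a\mapsto a_{(-1)}$ (the key check being $[a_{(-1)},b_{(-1)}]=\bigl(\int_{-\partial}^{0}[a_\lambda b]\,d\lambda\bigr)_{(-1)}$), apply the existence theorem for mutually local, translation-covariant generating fields, and get uniqueness from the Wick formula and quasiassociativity.
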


Let $U$ be a vector space. The tensor algebra
\[
\T(U) := \mathbb{C} \oplus U \oplus U^{\otimes 2} \oplus \cdots\, , 
\]
is naturally a vector space. 
Moreover if $U$ is a $\mathbb{C}[T]$-module $\T(U)$ is a $\mathbb{C}[T]$-module by letting $T(1)=0$ and $T(A\otimes B)=T(A)\otimes B+A\otimes T(B)$.
\begin{definition}
    A non-linear {conformal algebra} is a $\mathbb{C}[T]$-module $R$ endowed with an $\C$-bilinear map $[\cdot_\lambda \cdot] : R \otimes R \rightarrow \T(R) [\lambda] $ satisfying
    \begin{enumerate}
        \item[] {Sesquilinearity}\,\,\, $[\partial a_\lambda b] = -\lambda [a_\lambda b], \hspace{.25cm}[a_\lambda \partial b] = (\lambda + \partial) [a_\lambda b]$.
   \end{enumerate}
\end{definition}

The following lemma from \cite[Lemma 3.2]{SK} introduces the normally ordered product on
$T(R)$ and extends the bracket to the whole tensor algebra $T(R)$.

\begin{lemma} Let $R$ be a non-linear conformal algebra. There exist unique linear maps
\[
N : T(R) \otimes T(R) \longrightarrow T(R), \qquad
L_{\lambda} : T(R) \otimes T(R) \longrightarrow \mathbb{C}[\lambda] \otimes T(R),
\]
such that, for $a,b,c,\dots \in R$ and $A,B,C,\dots \in T(R)$, the following conditions hold:
\begin{align*}
&N(1, A) = N(A, 1) = A, \qquad N(a, B) = a \otimes B, \\
&N(a \otimes B, C) = N(a, N(B, C))
+ N\!\left(\left(\int_{0}^{T} d\lambda\, a\right), L_{\lambda}(B, C)\right) \notag\\
&\quad\qquad\qquad  +  N\!\left(\left(\int_{0}^{T} d\lambda\, B\right), L_{\lambda}(a, C)\right), \\
&L_{\lambda}(1, A) = L_{\lambda}(A, 1) = 0, \qquad L_{\lambda}(a, b) = [a_{\lambda} b], \\
&L_{\lambda}(a, b \otimes C) = N(L_{\lambda}(a,b), C)
+  N(b, L_{\lambda}(a,C)) \\
&\quad\qquad\qquad + \int_{0}^{\lambda} d\mu \, L_{\mu}(L_{\lambda}(a,b), C), \\
&L_{\lambda}(a \otimes B, C) = N\!\left(e^{T\partial_{\lambda}} a, L_{\lambda}(B, C)\right)
+ \, N\!\left(e^{T\partial_{\lambda}} B, L_{\lambda}(a, C)\right) \notag\\
&\quad\qquad\qquad +  \int_{0}^{\lambda} d\mu \, L_{\mu}(B, L_{\lambda-\mu}(a, C)). 
\end{align*}
\end{lemma}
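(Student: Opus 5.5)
The plan is to construct $N$ and $L_\lambda$ by a simultaneous recursion and to read uniqueness off the fact that the listed identities determine every value from values on "smaller" arguments.

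\emph{Uniqueness.} $T(R)$ is spanned by $1$ and monomials $a\otimes B$ with $a\in R$. The first line fixes $N(1,\cdot)$ and $N(a,\cdot)$. The recursion for $N(a\otimes B,C)$ expresses it through $N(a,\cdot)$, $N(B,\cdot)$, $L_\lambda(B,C)$ and $L_\lambda(a,C)$. Similarly, $L_\lambda(a,b\otimes C)$ and $L_\lambda(a\otimes B,C)$ are expressed through brackets of shorter words together with one further application of $N$ or $L$ to the output. So any two pairs of maps satisfying the identities agree, provided these reductions terminate.

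\emph{Existence and termination.} Termination is the real issue, and I expect it to be the main obstacle. The term $L_\lambda(a,b\otimes C)$ contains $\int_0^\lambda L_\mu(L_\lambda(a,b),C)\,d\mu$. Here $L_\lambda(a,b)=[a_\lambda b]$ is an arbitrary element of $T(R)[\lambda]$, so induction on tensor length alone does not close. I would therefore use the grading hypothesis of \cite{SK}, which I take to be implicit in the setting:
\begin{itemize}
\item $R=\bigoplus R[\Delta]$ with $\Delta>0$, extended additively to $T(R)$;
\item $[R[\Delta_1]_\lambda R[\Delta_2]]$ has all its monomials of weight strictly less than $\Delta_1+\Delta_2$.
\end{itemize}
The induction then runs on the pair (total weight $\Delta(A)+\Delta(C)$, total tensor length), ordered lexicographically.

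\emph{Key step.} The key verification is that each recursive formula only calls $N$ or $L$ on arguments strictly smaller in this order. Consider the $N$-recursion:
\begin{itemize}
\item $N(a,N(B,C))$ has smaller length in its first slot at the same weight.
\item The terms involving $L_\lambda(B,C)$ and $L_\lambda(a,C)$ have strictly smaller weight by the grading condition.
\end{itemize}
In the $L$-recursions the inner brackets lower the weight by the same condition. The outer $N$ or $L_\mu$ is then applied to arguments of total weight below $\Delta(a)+\Delta(b)+\Delta(C)$, so it is already defined. One must also check that the integrals $\int_0^T d\lambda$, $\int_0^\lambda d\mu$ and the shift $e^{T\partial_\lambda}$ do not increase weight. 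This holds because $T$ preserves weight under the standard convention that $\Delta(Ta)=\Delta(a)+1$ is balanced by the powers of $\lambda$. Alternatively, one can simply note that these operations act on polynomial coefficients and leave the monomial weights controlled.

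\emph{Consistency of the definition.} Define $N$ and $L_\lambda$ on monomials by the formulas, reading $N(a\otimes B,C)$ and $L_\lambda(a\otimes B,C)$ with $a$ the first tensor factor, and extend linearly. Since every monomial of length at least one has a unique decomposition $a\otimes B$, there is no overlap ambiguity in the first argument. The case $L_\lambda(a,b\otimes C)$ with a single letter $a$ on the left is governed by the second formula. When the first argument has length at least two, the third formula is used instead; this fixes which formula applies and gives a well-defined map. The boundary cases $L_\lambda(1,A)=L_\lambda(A,1)=0$ and $N(1,A)=N(A,1)=A$ must be compatible with the recursions. I would check this directly: for instance, $N(a\otimes B,1)$ reduces to $a\otimes B$ because the $L$-terms vanish. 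This completes the induction and gives existence. Uniqueness follows from the same recursion, as explained above.
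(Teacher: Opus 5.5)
Your plan is the standard one: define $N$ and $L_\lambda$ by the recursions and show every recursive call is in strictly lower degree. The paper gives no proof of its own; it quotes \cite[Lemma 3.2]{SK} and drops the grading hypothesis of \cite{SK}, which you are right to restore. As written, however, your key step has three gaps.

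First, the convention for $T$. You need each $R[\Delta]$ to be a $\mathbb{C}[T]$-submodule ($T$ of degree $0$), with $\lambda$ carrying no weight. Then $T^kB$ has the degree of $B$, and the grading condition bounds every coefficient of $\lambda^n$. Under the convention you invoke, $\Delta(Ta)=\Delta(a)+1$ with $\lambda$ of weight one, the rule $\lambda^n\mapsto T^{n+1}/(n+1)$ in $\int_0^T d\lambda$ gives back exactly the weight the bracket removed. For brackets homogeneous of weight $\Delta_1+\Delta_2-1$, the call $N(T^{n+1}B,c_n)$, with $c_n$ the coefficient of $\lambda^n$ in $L_\lambda(a,C)$, has the same total weight as $(a\otimes B,C)$. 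Since $c_n$ can be longer than $C$, your secondary length component need not drop either.

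Second, the grading condition controls only $[a_\lambda b]$ with $a,b\in R$, but you apply it to $L_\lambda(B,C)$, $L_\lambda(a,C)$ and $L_{\lambda-\mu}(a,C)$ for words $B,C$. To know that the outer calls $N(T^kB,L_\lambda(a,C))$ and $L_\mu(B,L_{\lambda-\mu}(a,C))$ are already defined, the induction must simultaneously prove $N(\mathcal{T}_{\Delta_1},\mathcal{T}_{\Delta_2})\subset\mathcal{T}_{\Delta_1+\Delta_2}$ and $L_\lambda(\mathcal{T}_{\Delta_1},\mathcal{T}_{\Delta_2})\subset\mathbb{C}[\lambda]\otimes\mathcal{T}_{<\Delta_1+\Delta_2}$. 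Here $\mathcal{T}_\Delta$ is the span of words of degree at most $\Delta$. Both inclusions follow from the recursions once assumed in lower degree.

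Third, positivity of the degrees does not make strict descent terminate; you need them in a discrete (hence well-ordered) subset of $\mathbb{R}_{>0}$. For instance, take $R=\bigoplus_{n\geq 1}\mathbb{C}[T]a_n$ with $\Delta(a_n)=2^{-n}$ and $[{a_n}_\lambda a_m]=a_k\otimes a_k$, $k=\max(n,m)+1$. This satisfies your grading condition, but the recursions give $L_\lambda(a_1,a_2\otimes a_2)=(\lambda\text{-independent})+2\lambda\,a_3\otimes a_4\otimes a_4+\int_0^\lambda\!\int_0^\mu L_\nu(a_3,a_4\otimes a_4)\,d\nu\,d\mu$. Iterating forces the coefficient of $\lambda^{2j+1}$ to be $\tfrac{2}{(2j+1)!}\,a_{2j+3}\otimes a_{2j+4}\otimes a_{2j+4}\neq 0$ for every $j$. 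So no polynomial $L_\lambda$ exists. With the grading forgotten, the same example shows the lemma cannot hold as literally stated in the paper.

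Finally, the length component of your order is superfluous. Apart from the base cases $N(1,\cdot)$, $N(x,\cdot)$ and $L_\lambda(x,y)$ with $x,y\in R$, every call in the recursions strictly lowers total degree, including $N(B,C)$ inside $N(a,N(B,C))$. Your claim that $N(a,N(B,C))$ is smaller in length is inaccurate, since total length is unchanged, but harmless because $N(a,\cdot)$ is a base case. With these repairs, your uniqueness and consistency arguments go through.
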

   
Now, we define the subspace  $\M(R) \subseteq \T(R)$ spanned by elements of the form
\[B \otimes \left( \bigl( a \otimes b -  b \otimes a \bigr) \otimes C -{N} \biggl[ \int_{-T}^0 d\lambda \, \mathcal{L}_\lambda(a, b), \, C \biggr]\right),\]
for all $a, b \in R$ and $B, C \in \T(R)$. We denote by $U(R)=\T(R)/\M(R)$

\begin{definition}
    A non-linear Lie {conformal algebra} is a non-linear {conformal algebra} $R$ such that the bracket $[\cdot_\lambda \cdot] : R \otimes R \rightarrow \T(R) [\lambda] $ satisfies
    \begin{enumerate}
        \item[] {Skew-symmetry} $[a_\lambda b] = -[b_{-\lambda-\partial} a]$ , 
        \item[] {Jacobi identity}\,\,\, $[a_\lambda [b_\mu c]] = [b_\mu [a_\lambda c]] + [[a_\lambda b]_{\lambda+\mu} c]$ on  $\CC[\lambda,\mu]\otimes U(R)$. 
   \end{enumerate}
\end{definition}

\section{Celestial Algebras}

Let $\mathfrak{g}$ be a Lie algebra.

\begin{enumerate}
\item
The current Lie algebra $\mathfrak{g}[u,v]=\mathfrak{g}\otimes\mathbb{C}[u,v]$ has bracket
\[
[a\otimes f,\; b\otimes g]=[a,b]\otimes fg.
\]

\item
The adjoint semidirect product Lie algebra $\mathfrak{g}\ltimes\mathfrak{g}_{\mathrm{ad}}$ is given by
\[
[(a,x),(b,y)]=([a,b],[a,y]-[b,x]).
\]
\end{enumerate}

In this work, we consider the following  affine conformal algebra.
\begin{example} \label{exa 2.4}
    Let $\lieg$ be a Lie algebra. The vector space
        $$R_{J,I}: = \C [\partial] \otimes (\mathfrak{g}\ltimes\mathfrak{g}_{\mathrm{ad}})[u,v] \, , $$
    with bracket \eqref{eq1.1} forms a Lie conformal algebra. 
    
    We will consider the notation 
    \[J_a[m,n]:=(a,0)\otimes u^mv^n, \qquad  I_a[m,n]:=(0,a)\otimes u^mv^n\, .\]
    Hence, the bracket  \eqref{eq1.1} can be described as follows:
        \begin{equation}\label{eq2.1}
        \begin{split}
           & [J_a[m,n]_\lambda J_b[t,u]] = J_{[a,b]}[m+t,n+u] , \\
           & [J_a[m,n]_\lambda I_b[t,u]] = I_{[a,b]}[m+t, n+u] , \\
           & [I_a[m,n]_\lambda I_b[t,u]] = 0 .
            \end{split}
        \end{equation}
\end{example}

The algebras introduced in Example \ref{exa 2.4} appear in the context of tree-level gluon scattering amplitudes within pure Yang-Mills theories, see \cite{N,CP}. The following deformation was introduced by \cite{CP}, motivated by an anomaly on self-dual Yang-Mills theories.

Example \ref{exa 2.4} admits the following extension.

\begin{proposition}\label{pro3.2}
Let $\mathfrak{h}=\Span{\{E,F\}}$ be a two-dimensional abelian Lie algebra. The vector space
  $$R_{J,I,E,F} :=R_{J,I}\bigoplus  \C [\partial] \otimes \left({\lieh[u,v]}/{\C E[0,0]}\right)  \, ,  $$
  with brackets defined by \eqref{eq2.1} and 
   \begin{equation}\label{eq2.2}
\begin{split}
    [J_a[m,n]_\lambda E[t,u]] &= \beta  \frac{um-tn}{t+u} I_a[m+t-1, n+u-1] \, , \\
    [J_a[m,n]_\lambda F[t,u]] &= -\beta \left( \lambda  + \frac{m+n}{t+u+2} (\lambda+\partial) \right) I_a[m+t, n+u]\, , 
\end{split}
\end{equation}
where $\beta\in \CC$,  $E[m,n]:=E\otimes u^mv^n$ and $ F[m,n]:=F\otimes u^mv^n$, forms  a Lie conformal algebra.
\end{proposition}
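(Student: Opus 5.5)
The plan is a direct verification of the axioms of a Lie conformal algebra. I would first make the bracket completely explicit. On $R_{J,I}$ it is given by \eqref{eq2.1}; on pairs involving $E$ or $F$ it is given by \eqref{eq2.2}; the brackets $[E[t,u]_\lambda J_a[m,n]]$ and $[F[t,u]_\lambda J_a[m,n]]$ are defined through skew-symmetry, $[X_\lambda J]:=-[J_{-\lambda-\partial}X]$; and all brackets among $\{I,E,F\}$ are set to zero. Everything is extended to $\C[\partial]$-multiples by sesquilinearity.

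With this definition, sesquilinearity and skew-symmetry hold by construction. The only skew-symmetry check not automatic from the definition is that $R_{J,I}$ itself is skew-symmetric, which is Example \ref{exa 2.4}. I would also check that the formulas are well defined. The denominators $t+u$ and $t+u+2$ never vanish, because $E[0,0]$ has been quotiented out. No negative index survives in the first formula of \eqref{eq2.2}: if, say, $m=t=0$, then the coefficient is $um-tn=0$, and symmetrically if $n=u=0$.

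For the Jacobi identity $[x_\lambda[y_\mu z]]=[y_\mu[x_\lambda z]]+[[x_\lambda y]_{\lambda+\mu}z]$, I would use the standard fact that, in the presence of skew-symmetry and sesquilinearity, the Jacobi identity for a triple implies it for all permutations of that triple. It therefore suffices to consider unordered triples of generators.

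There are three groups of cases.
\begin{enumerate}
\item[(i)] All three elements lie in $R_{J,I}$. This is Example \ref{exa 2.4}, the affine conformal algebra of $(\lieg\ltimes\lieg_{\mathrm{ad}})[u,v]$.
\item[(ii)] At least two of the three elements lie in $\{I,E,F\}$. Every nonzero bracket involving $E$ or $F$ lands in the span of the $I$'s, and the $I$'s bracket trivially with $I,E,F$. Hence all three terms vanish.
\item[(iii)] The triple is $J_a[m,n],J_b[p,q],X$ with $X\in\{E[s,r],F[s,r]\}$. This is the only real computation.
\end{enumerate}

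For $X=E[s,r]$, all three terms are multiples of $I_{[a,b]}[m+p+s-1,n+q+r-1]$. The coefficients are, respectively,
\[
\frac{\beta(rp-sq)}{s+r},\qquad -\frac{\beta(rm-sn)}{s+r},\qquad \frac{\beta\bigl(r(m+p)-s(n+q)\bigr)}{s+r}.
\]
The identity then follows from the linearity of $rm-sn$ in $(m,n)$.

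For $X=F[s,r]$, set $k_a=\frac{m+n}{s+r+2}$ and $k_b=\frac{p+q}{s+r+2}$. The key point is that $[J_{a\,\lambda}\partial I]=(\lambda+\partial)[J_{a\,\lambda}I]$ by sesquilinearity. This gives
\[
[J_a{}_\lambda[J_b{}_\mu F]]=-\beta\bigl(\mu+k_b(\lambda+\mu+\partial)\bigr)I_{[a,b]}[\cdot],
\]
where $[\cdot]$ denotes the index $[m+p+s,n+q+r]$. Symmetrically, the second term is $-\beta\bigl(\lambda+k_a(\lambda+\mu+\partial)\bigr)I_{[b,a]}[\cdot]$. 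For the third term, $J_{[a,b]}[m+p,n+q]$ has parameter $k_a+k_b$, so it equals $-\beta\bigl((\lambda+\mu)+(k_a+k_b)(\lambda+\mu+\partial)\bigr)I_{[a,b]}[\cdot]$. The identity again follows from additivity of $k$ in the index sum.

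I expect the main obstacle to be bookkeeping rather than any conceptual difficulty. One has to track the $\lambda+\partial$ factors correctly under nested brackets, and one has to justify the reduction to a single ordering of each triple. If the permutation argument felt shaky, I would instead verify the orderings with $E$ or $F$ in the first or second slot directly. These are again linear computations, now using the skew-symmetric definition $[E_\lambda J]=-[J_{-\lambda-\partial}E]$.
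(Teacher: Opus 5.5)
Your proposal is correct and follows essentially the same route as the paper. Both reduce to triples of generators, dispose of the $J,I$-only triples via Example \ref{exa 2.4} and of the triples with two entries from $\{I,E,F\}$ as trivially vanishing, and then verify $(J,J,E)$ and $(J,J,F)$ by the identical direct computations; you also make explicit the permutation reduction, the nonvanishing denominators and the harmlessness of negative indices, all of which the paper uses only implicitly.
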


It follows from the previous proposition and Theorem \ref{thm  BK} that 
\begin{equation}\label{eq v}
\V^{\beta}(\lieg):=U(R_{J,I,E,F})
\end{equation}
forms  a vertex  algebra.

\begin{proof} We impose the skew-symmetry of \eqref{eq2.2} by definition. Then by sesquilinearity, it follows that $R_{J,I,E,F}$ is a Lie conformal algebra if the Jacobi identity is satisfied for any triple  
\[(a,b,c)\in X\times X\times X\] 
where $X=\{J_a[n,m],I_a[n,m],E[n,m],F[n,m]\, |\, a\in\lieg\, ,\,\,\, n,m\in\ZZ_{\geq 0}\}$. 

 We proceed as follows. For a triple $(a,b,c)$ we compute (1) $[a_{\lambda}[b_{\mu}c]]$, (2) $[b_{\mu}[a_{\lambda}c]]$ and (3) $[[a_{\lambda}b]_{\lambda+\mu}c]$. Then we verify that $(1)=(2)+(3)$.  
 
  \medskip

Case 1: The Jacobi identity for triples involving only $J$'s and $I$'s follows from Example \ref{exa 2.4}. Additionally, the Jacobi identity for triples containing pairs $(E,F)$, $(I,E)$ and $(I,F)$ are trivially satisfied.

 \medskip

Case 2: The Jacobi identity for  \((J, J, E)\)

The term (1) is 
    \begin{align*}
        [J_a[m,n]&_\lambda [J_b[r,s]_\mu E[t,u]]]  = [J_a[m,n]_\lambda \beta\frac{ru-st}{t+u} I_b[r+t-1, s+u-1]] \\
        &= \beta \frac{ru-st}{t+u}I_{[a,b]} [m+r+t-1, n+s+u-1] \, . 
    \end{align*}
Analogously,  the term $(2)$ is
      \begin{align*}
        [J_b[r,s]_\mu &[J_a[m,n]_\lambda E[t,u]]]   =-\beta \frac{um-tn}{t+u} I_{[a,b]}[m+r+t-1, n+s+u-1]\,.
    \end{align*}
 And $(3)$ is 
    \begin{align*}
        [[J_a[m,n]&_\lambda J_b[r,s]]_{\lambda+\mu} E[t,u]] = [J_{[a,b]} [m+r, n+s]_{\lambda+\mu}E[t,u]] \\
        & = \beta \frac{u(m+r)-t(n+s)}{t+u}I_{[a,b]} [m+r+t-1, n+s+u-1].
    \end{align*}
It is easy to see that $(1)=(2)+(3)$ is satisfied.
    
    \medskip

    {Case 3: The Jacobi identity for  \(J, J, F\).}
    
    The term (1) is 
    \begin{align*}
        [J_a[m,n]&_\lambda [J_b[r,s]_\mu F[t,u]]]  = [J_a[m,n]_\lambda -\beta\left( \mu  + \frac{r+s}{t+u+2} (\mu+\partial) \right) I_b[r+t, s+u]] \\
        &=  -\beta\left( \mu  + \frac{r+s}{t+u+2} (\mu+\lambda+\partial) \right)  I_{[a,b]}[m+r+t,n+ s+u]\, . 
    \end{align*}
    Analogously,  the term $(2)$ is
      \begin{align*}
        [J_b[r,s]_\mu &[J_a[m,n]_\lambda F[t,u]]]   = \beta\left( \lambda  + \frac{m+n}{t+u+2} (\lambda+\mu+\partial) \right)  I_{[a,b]}[m+r+t,n+ s+u]\,.
    \end{align*}
     And $(3)$ is 
    \begin{align*}
        [[J_a[m,n]&_\lambda J_b[r,s]]_{\lambda+\mu} F[t,u]] = [J_{[a,b]} [m+r, n+s]_{\lambda+\mu}F[t,u]] \\
        & =- \beta\left( \lambda+\mu  + \frac{m+r+n+s}{t+u+2} (\lambda+\mu+\partial) \right)I_{[a,b]} [m+r+t, n+s+u].
    \end{align*}
    It is easy to see that $(1)=(2)+(3)$ is satisfied.
\end{proof}

\section{Non-linear deformation}

In this section, we restrict our attention to the generators in \eqref{eq: gen2}. Let $e_i$ be basis of $\lieg$ where $1\leq i\leq \dim \lieg$ and denote its dual basis by $e^i$, that is, $(e_i, e^j)=\delta_{ij}$. 
Here and throughout, repeated indices are understood to be summed over their full range. We consider the following relations:

\begin{equation}\label{eq v2}
\begin{split}
       & [J_a[1,0]_\lambda  J_b[0,1]] = J_{[a,b]}[1,1]-\beta(a,b)((2\lambda+\partial)E[1,1]+F[0,0]) +\\
       &\hspace{0.3cm} D(2\lambda+\partial) I_{[a,b]} [0,0]   +C   \left( J_{[a,e_i]}[0,0]I_{[b,e^i]}[0,0]+ J_{[b,e_i]}[0,0]I_{[a,e^i]}[0,0]\right), \\
        &[J_a[n,m]_\lambda J_b[0,0]] = -\beta(a,b)(n+m)(\lambda+\partial)E[n,m]+J_{[a,b]}[n,m] \,,  \\
        &[J_a[1,0]_\lambda I_b[0,1]] = I_{[a,b]}[1,1] - C I_{[a,e_i]}[0,0]I_{[b,e^i]}[0,0] \,,   \\
       & [J_a[0,1]_\lambda I_b[1,0]] = I_{[a,b]}[1,1] + C I_{[a,e_i]}[0,0]I_{[b,e^i]}[0,0]\, .        
    \end{split}
\end{equation}
where $C,D\in \mathbb{C}$, and in the second equation $n$, $m$ satisfy $n+m\leq 2$.


\begin{theorem}\label{thm4.1}
  \[[J_a[1,0]_\lambda [J_b[0,1]_\mu J_c[0,0]]] =[J_b[0,1]_\mu [J_a[1,0]_\lambda J_c[0,0]]]+[[J_a[1,0]_\lambda J_b[0,1]]_{\lambda+\mu}J_c[0,0]]\]
     if and only if $\beta=D=C=0$ or 
     \begin{align*}
     &\Tr((\ad_a)^4)=\alpha (\Tr((\ad_a)^2))^2\\
&D=\frac{-\beta^2}{8h^{\vee}\alpha}\, ,  \qquad C=\frac{3\beta^2}{8(h^{\vee})^2\alpha}
\end{align*}

\end{theorem}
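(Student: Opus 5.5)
We expand each of the three terms of the Jacobi identity using the relations \eqref{eq v2}, the brackets \eqref{eq2.1}, \eqref{eq2.2}, and the non-linear Wick rules of the Lemma from \cite[Lemma 3.2]{SK}. The quadratic terms $J_{[a,e_i]}[0,0]I_{[b,e^i]}[0,0]$ are bracketed using $L_\lambda(a,b\otimes C)$. The result is compared in $\CC[\lambda,\mu]\otimes U(R)$. Every term is then organised by its type: linear terms in $J[1,1]$, $I[1,1]$, $E$, $F$, $I[0,0]$, and quadratic normally ordered terms $JI$, $II$.

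The first step is the routine part. For the inner brackets, $[J_b[0,1]_\mu J_c[0,0]]$ and $[J_a[1,0]_\lambda J_c[0,0]]$ are given by the second relation of \eqref{eq v2} with $n+m=1$. Bracketing them with $J_a[1,0]$, respectively $J_b[0,1]$, uses the first, third and fourth relations together with \eqref{eq2.2} for the $E$-terms. For instance $[J_a[1,0]_\lambda E[0,1]]=\beta I_a[0,0]$ and $[J_b[0,1]_\lambda E[1,0]]=-\beta I_b[0,0]$.

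For the third term we bracket each summand of $[J_a[1,0]_\lambda J_b[0,1]]$ with $J_c[0,0]$ at parameter $\lambda+\mu$:
\begin{itemize}
\item $J_{[a,b]}[1,1]$ gives $J_{[[a,b],c]}[1,1]$ plus $-2\beta([a,b],c)(\cdot)E[1,1]$;
\item the $E[1,1]$ and $F[0,0]$ terms are converted by skew-symmetry from \eqref{eq2.2};
\item the $D$-term gives $D\,I_{[[a,b],c]}[0,0]$ with polynomial coefficients;
\item the $C$-term, via the Wick formula, gives $C(J_{[[a,e_i],c]}I_{[b,e^i]}+J_{[a,e_i]}I_{[[b,e^i],c]}+\dots)$.
\end{itemize}
The Wick formula also produces an integral term $\int_0^{\lambda+\mu}$, which yields a linear $I[0,0]$ contribution proportional to $C\,\beta$-free structure constants of the form $I_{[[[a,e_i],c],[b,e^i]]}$.

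The linear $J[1,1]$ and $I[1,1]$ terms cancel by the ordinary Jacobi identity of $\lieg$. The $C$-quadratic terms in the first and second terms likewise come from the third and fourth relations and must cancel against the third term. I expect this cancellation to follow from $\ad$-invariance of the Casimir $e_i\otimes e^i$, since $\sum_i [x,e_i]\otimes e^i+e_i\otimes[x,e^i]=0$. I would verify this before anything else, because it is the only step that could constrain $C$ independently.

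The main obstacle is the $I[0,0]$ part, which has polynomial coefficients in $\lambda,\mu$. It collects contributions from $\beta^2$ (through $E$/$F$ brackets producing $I_{[a,b]}$-type terms times $(a,b)$ pairings), from $D$, and from $C$ (through the integral term of the Wick formula and from the $[J,I]$ relations with index $[1,0]$ or $[0,1]$). The $\beta^2$ pieces look like $(b,c)I_a$, $(a,c)I_b$ and $(a,b)I_c$. The $D$-pieces are $I_{[[a,b],c]}$, and the $C$-pieces are $\sum_i I_{[[a,e_i],[[b,e^i],c]]}$-type quartic Casimir expressions.

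Matching coefficients of independent monomials in $\lambda,\mu$ gives a linear system in $(\beta^2,D,C)$ that must hold as an identity of maps $\lieg^{\otimes 3}\to\lieg$. To solve it I would pair with a fourth element $d$, turning everything into invariant symmetric 4-tensors: products of Killing forms $(a,b)(c,d)$, the expressions $([a,b],[c,d])$, and $\Tr(\ad_a\ad_{e_i}\ad_b\ad_{e^i}\cdots)$. I would then symmetrise, or specialise to $a=b=c=d$ after polarisation. This reduces the quartic Casimir term to $\Tr((\ad_a)^4)$ and the $\beta^2$ term to $(\Tr(\ad_a)^2)^2$, using \eqref{fq2.1} to convert $(a,a)=\Tr(\ad_a^2)/2h^\vee$.

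Solvability with $\beta\neq0$ then forces $\Tr(\ad_a^4)$ to be proportional to $(\Tr\ad_a^2)^2$ with some constant $\alpha$, that is, no independent quartic Casimir. The remaining equations then determine $D=-\beta^2/(8h^\vee\alpha)$ and $C=3\beta^2/(8(h^\vee)^2\alpha)$. Conversely, if $\beta=0$ the same system forces $D=C=0$, and under the trace identity the chosen values satisfy all equations. I would double-check the normalisation constants $8$ and $3$ by testing on $\lieg=\mathfrak{sl}_2$ with explicit bases.
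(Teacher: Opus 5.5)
\textbf{The gap is in how you solve for $D$ and $C$.} The first part of your plan matches the paper: expand the three brackets, cancel the $J[1,1]$, $E$, $F$ and quadratic $JI$ pieces using the Jacobi identity of $\lieg$ and invariance of the Casimir, and reduce to the coefficient of $I[0,0]$, paired with a fourth element $x$ (your $d$).

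First, matching monomials in $\lambda,\mu$ gives only one equation, not a system. After rewriting $[b,[a,c]]=[a,[b,c]]+[c,[a,b]]$, every $I[0,0]$ term carries the common factor $\lambda+\mu+\partial$. So you get the single $4$-tensor identity
\begin{align*}
2D\,(x,[a,[b,c]])+D\,(x,[c,[a,b]])
&=\beta^2\bigl((b,c)(x,a)+(a,c)(x,b)+(a,b)(c,x)\bigr)\\
&\quad -C\bigl(\Tr(\ad_b\ad_a\ad_c\ad_x)+\Tr(\ad_a\ad_b\ad_c\ad_x)\bigr).
\end{align*}

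Second, contrary to your assertion, these tensors are not symmetric. The $D$-terms are built from commutators, so they vanish identically under full symmetrisation, in particular at $a=b=c=x$. On the diagonal the identity collapses to $2C\,\Tr((\ad_a)^4)=3\beta^2(a,a)^2$. That does force the trace condition when $\beta\neq0$, and it gives $C=3\beta^2/(8(h^\vee)^2\alpha)$. But it carries no information about $D$. So your sentence ``the remaining equations then determine $D$'' has nothing behind it, and for $\beta=0$ you only get $C=0$, not $D=0$.

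\textbf{The missing idea is to extract the non-symmetric component.} By Lemma \ref{lem4.2}(iii), the $D$-side equals
\[
\tfrac{3D}{h^\vee}\bigl(\Tr(\ad_a\ad_b\ad_c\ad_x)+\Tr(\ad_b\ad_a\ad_c\ad_x)\bigr)-\tfrac{2D}{h^\vee}\bigl(\Tr(\ad_a\ad_b\ad_c\ad_x)+\Tr(\ad_a\ad_c\ad_x\ad_b)+\Tr(\ad_a\ad_x\ad_b\ad_c)\bigr).
\]
The second bracket is a sum over coset representatives of $S_4/D_4$, hence $S_4$-symmetric by Lemma \ref{lem4.2}(ii). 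The first bracket is exactly the $C$-structure, and it is not invariant under $b\leftrightarrow c$ (check with an $\mathfrak{sl}_2$-triple, as in the paper's footnote). Antisymmetrising in $b,c$ therefore gives the second relation $C+3D/h^\vee=0$. Combining this with the symmetric part through the polarised trace identity, Lemma \ref{lem4.2}(iv), yields $D=-\beta^2/(8h^\vee\alpha)$.

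The same decomposition is needed for the converse direction. A check on the diagonal cannot certify an identity between non-symmetric tensors. You need $C+3D/h^\vee=0$ to kill the non-symmetric part, and Lemma \ref{lem4.2}(iv) for the full polarised symmetric part.

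Minor points: the quadratic $C$-terms in the first two brackets come from the first relation of \eqref{eq v2}, not from the $[J,I]$ relations. Also, no $I[1,1]$ terms occur in this Jacobi identity.
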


Before we prove the Theorem, we state the following Lemma

\begin{lemma}\label{lem4.2} Let $\lieg$ be a finite-dimensional simple Lie algebra endowed with a bi-invariant pairing $(.,.)$. Let $\{e_i\}$ be a basis of $\lieg$ with dual basis $\{e^i\}$, i.e. $(e_i,e^j)=\delta_i^{\,j}$. The following conditions hold:
\begin{enumerate}
\item \label{lem4.2 i)} For $a,b,c\in\lieg$,
\[
[[c,[b,e_i]],[a,e^i]]
\;=\;
-\Tr(\ad_a\ad_b\ad_c\ad_{e_i})\,e^i .
\]
\item \label{lem4.2 ii)} Let $D_4=\langle(1 2 3 4), (1 2)(3 4)\rangle \subset S_4$ be the Dihedral group.  For any $\sigma\in D_4$ and for all $a_1,a_2,a_3,a_4\in\lieg$
\[\Tr(\ad_{a_1}\ad_{a_2}\ad_{a_3}\ad_{a_4}) = \Tr(\ad_{a_{\sigma(1)}}\ad_{a_{\sigma(2)}}\ad_{a_{\sigma(3)}}\ad_{a_{\sigma(4)}})\] 
 \item \label{lem4.2 iii)} For all $a,b,c,d\in\lieg$
 \begin{align*}
&2{\Tr}({[\ad_a,\ad_d]} {[\ad_b,\ad_c]}) + {\Tr}({[\ad_a,\ad_b]} {[\ad_c,\ad_d]})\\
&\quad  =4 \big( {\Tr}(\ad_a \ad_b \ad_c \ad_d) +{\Tr}(\ad_a \ad_c \ad_d \ad_b) + {\Tr}(\ad_a \ad_d \ad_b \ad_c) \big)\\
&\quad -6 \big( {\Tr}(\ad_a \ad_b \ad_c \ad_d) + {\Tr}(\ad_b \ad_a \ad_c \ad_d) \big) 
 \end{align*}
\item \label{lem4.2 iv)}There exists $\alpha\in\C$ such that
\[
\Tr\big((\ad_a)^4\big)=\alpha\,\Tr\big((\ad_a)^2\big)^2
\]
 for all $a\in\lieg$ if and only if, for all $a,b,c,d\in\lieg$,
\begin{align*}
&\Tr(\ad_a\ad_b\ad_c\ad_d)+\Tr(\ad_a\ad_c\ad_d\ad_b)+\Tr(\ad_a\ad_d\ad_b\ad_c)\\
&=
\alpha\Big(
\Tr(\ad_a\ad_b)\Tr(\ad_c\ad_d)
+\Tr(\ad_a\ad_c)\Tr(\ad_d\ad_b)
+\Tr(\ad_a\ad_d)\Tr(\ad_b\ad_c)
\Big).
\end{align*}

\end{enumerate}
\end{lemma}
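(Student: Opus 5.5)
We prove the four items separately. Everything rests on three elementary facts.
\begin{itemize}
\item[(F1)] Invariance of the pairing $(\cdot,\cdot)$, i.e.\ each $\ad_x$ is skew-adjoint: $(\ad_x u,v)=-(u,\ad_x v)$.
\item[(F2)] The dual-basis completeness relations $\sum_i (u,e_i)(e^i,v)=(u,v)$ and $\Tr(M)=\sum_i (Me_i,e^i)$.
\item[(F3)] Cyclicity of the trace.
\end{itemize}

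\emph{Item (\ref{lem4.2 i)}).} We pair the left-hand side with an arbitrary $y\in\lieg$. Using invariance,
\[
\big([[c,[b,e_i]],[a,e^i]],y\big)=\big(\ad_c\ad_b e_i,\,[[a,e^i],y]\big)=-\big(\ad_c\ad_b e_i,\,\ad_y\ad_a e^i\big).
\]
Moving $\ad_y$ and then $\ad_a$ across the pairing with (F1) gives $-(\ad_a\ad_y\ad_c\ad_b e_i,e^i)$, up to the signs from skew-adjointness. Summing over $i$ with (F2) turns this into a trace, $\pm\Tr(\ad_a\ad_y\ad_c\ad_b)$. Item (\ref{lem4.2 ii)}) (reversal plus cyclicity) rewrites this as $\pm\Tr(\ad_a\ad_b\ad_c\ad_y)$. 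Finally, every $x\in\lieg$ can be expanded as $x=\sum_i (x,e_i)e^i$. Applying this with $y=e_i$ gives the stated formula, once the overall sign is tracked carefully.

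\emph{Item (\ref{lem4.2 ii)}).} The group $D_4$ is generated by the $4$-cycle and one reflection. Invariance under the $4$-cycle is cyclicity of the trace. For the reflection, we compose it with a suitable rotation to reduce to the reversal $a_1a_2a_3a_4\mapsto a_4a_3a_2a_1$. Reversal follows from $\Tr(M)=\Tr(M^{t})$, where $t$ is the adjoint with respect to $(\cdot,\cdot)$. Since $\ad_x^{t}=-\ad_x$ by (F1), the adjoint of $\ad_{a_1}\ad_{a_2}\ad_{a_3}\ad_{a_4}$ is $(-1)^4\ad_{a_4}\ad_{a_3}\ad_{a_2}\ad_{a_1}$.

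\emph{Item (\ref{lem4.2 iii)}).} The $24$ orderings of $a,b,c,d$ inside $\Tr(\ad\,\ad\,\ad\,\ad)$ fall into three $D_4$-orbits. We take representatives
\[
X=\Tr(\ad_a\ad_b\ad_c\ad_d),\qquad Y=\Tr(\ad_a\ad_c\ad_d\ad_b),\qquad Z=\Tr(\ad_a\ad_d\ad_b\ad_c).
\]
For example, $bacd$ is a rotation of $acdb$, so $\Tr(\ad_b\ad_a\ad_c\ad_d)=Y$. Hence the right-hand side equals $4(X+Y+Z)-6(X+Y)$. On the left, we expand each commutator product into four traces, giving eight in total. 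Each of these is identified with $X$, $Y$ or $Z$ by item (\ref{lem4.2 ii)}), and then the coefficients are compared. This is pure bookkeeping.

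\emph{Item (\ref{lem4.2 iv)}).} Both sides of the identity are symmetric $4$-linear forms in $a,b,c,d$. On the left, the sum $X+Y+Z$ is, up to the factor $1/8$, the full $S_4$-symmetrization of $X$, since each orbit has $8$ elements. The right-hand side is visibly symmetric. Setting $a=b=c=d$ in the identity gives $3\Tr(\ad_a^4)=3\alpha\Tr(\ad_a^2)^2$, which proves one direction. Conversely, a symmetric multilinear form is determined by its diagonal (polarization identity, valid over $\C$). The two sides have diagonals $3\Tr(\ad_a^4)$ and $3\alpha\Tr(\ad_a^2)^2$, which agree by hypothesis, so the full identity follows.

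The main obstacle is sign and ordering bookkeeping, specifically the overall sign in item (\ref{lem4.2 i)}) and the coefficient matching in item (\ref{lem4.2 iii)}). I would handle both by reducing everything systematically to the three orbit representatives $X,Y,Z$.
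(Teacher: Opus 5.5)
Your proof is correct. It rests on the same facts as the paper's: invariance of the form, dihedral symmetry of four-fold traces, and polarization. Items (1)--(3) are organized differently, though.

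The paper works in coordinates, with structure constants $f_{ab}^{c}$ and the inverse Gram matrix $k^{ij}$. There, invariance appears as the index identity $f_{ae_i}^{e_j}k^{il}=-f_{ae_r}^{e_l}k^{rj}$. Its computations in (1) and (2) are the coordinate version of your skew-adjointness moves; in (2), four applications of that identity give $\Tr(\ad_{a_1}\ad_{a_4}\ad_{a_3}\ad_{a_2})$.

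For (3), the paper starts from a multiple of the right-hand side and uses cyclicity to write $X+Y-2Z=\Tr([\ad_d,\ad_a]\ad_b\ad_c)+\Tr([\ad_a,\ad_c]\ad_d\ad_b)$. It then converts each term into $\frac12$ of a trace of two commutators via (2), and finishes with invariance of the trace form and the Jacobi identity. Your coordinate-free formulation is shorter. Reducing all eight traces to the orbit representatives $X,Y,Z$ makes (3) a mechanical check that uses only the $D_4$-symmetry.

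The two points you left open do close:
\begin{itemize}
\item In (1), the two skew-adjoint moves contribute $(-1)^2=1$. So the pairing with $y$ is exactly $-\Tr(\ad_a\ad_y\ad_c\ad_b)=-\Tr(\ad_a\ad_b\ad_c\ad_y)$, which is the stated sign.
\item In (3), one finds $\Tr([\ad_a,\ad_d][\ad_b,\ad_c])=2Z-2X$ and $\Tr([\ad_a,\ad_b][\ad_c,\ad_d])=2X-2Y$. So the left-hand side is $4Z-2X-2Y=4(X+Y+Z)-6(X+Y)$.
\end{itemize}
Item (4) is the paper's argument, with the polarization step stated more cleanly.
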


\begin{proof}
(1) If $k^{ij}$ denotes the inverse of $k_{ij}=(e_i,e_j)$, then $e^i=k^{ij}e_j$ and $f_{ae_i}^{e_j}k^{il}=-f_{ae_r}^{e_l}k^{rj}$. Hence, 
\begin{align*}
[[c,[a,e_i]],[b,e^i]]&=f_{ae_i}^{e_k}f_{ce_k}^{e_l}f_{be^i}^{e_r}f_{e_le_r}^{e_m}e_{m}\\
&=-f^{e_i}_{be_n}f_{ae_i}^{e_k}f_{ce_k}^{e_l}f_{e_se_l}^{e_n}e^{s}\\
&=-\Tr(\ad_b\ad_a\ad_c\ad_{e_s})e^s\, .
\end{align*}

(2)-We have that $k^{ir}k_{rj}=\delta_{ij}$, hence  
  \begin{align*}
            \Tr(\ad_{a_1}&\ad_{a_2}\ad_{a_3}\ad_{a_4}) = f_{{a_1}e_j}^{e_i} f_{{a_2}e_i}^{e_k} f_{{a_3}e_k}^{e_l} f_{{a_4}e_l}^{e_j}  \\
                &= (f_{{a_1}e_m}^{e_i}k^{mn}k_{nj}) (f_{{a_4}e_p}^{e_j}k^{pq}k_{ql}) (f_{{a_3}e_r}^{e_l}k^{rs}k_{sk}) (f_{{a_2}e_t}^{e_k}k^{tu}k_{ui})   \\
                &= (-f_{{a_1}e_o}^{e_n}k^{oi}k_{nj}) (-f_{{a_4}e_v}^{e_q}k^{vj}k_{ql}) (-f_{{a_3}e_w}^{e_s}k^{wl}k_{sk}) (-f_{{a_2}e_x}^{e_u}k^{xk}k_{ui}) \\
                &= f_{{a_1}e_o}^{e_n} f_{{a_4}e_n}^{e_q} f_{{a_3}e_q}^{e_s} f_{{a_2}e_s}^{e^o}= \Tr(\ad_{a_1}\ad_{a_4}\ad_{a_3}\ad_{a_2})= \Tr(\ad_{a_2}\ad_{a_1}\ad_{a_4}\ad_{a_3}).
        \end{align*}

(3)  The identity was presented in \cite[Appendix B]{CP2}  with a minor typo. We reproduce it here for completeness. First, using cyclicity of traces 
\begin{align*}
&3(\Tr(\ad_a\ad_b\ad_c\ad_d)+\Tr(\ad_b\ad_a\ad_c\ad_d))\\
&-2(\Tr(\ad_a\ad_b\ad_c\ad_d)+\Tr(\ad_a\ad_c\ad_d\ad_b)+\Tr(\ad_a\ad_d\ad_b\ad_c)) \\
&=\Tr([\ad_d,\ad_a]\ad_b\ad_c)+\Tr([\ad_a,\ad_c]\ad_d\ad_b)
\end{align*}

Next, notice that by Lemma \ref{lem4.2} (\ref{lem4.2 ii)}) we have that $\Tr([\ad_d,\ad_a]\ad_b\ad_c)=\frac{1}{2}\Tr([\ad_d,\ad_a][\ad_b,\ad_c])$. And
\begin{align*}
  &  \Tr([\ad_d,\ad_a]\ad_b\ad_c)+\Tr([\ad_a,\ad_c]\ad_d\ad_b) \\
&=\frac{1}{2}\Tr([\ad_d,\ad_a][\ad_b,\ad_c])+\frac{1}{2}\Tr([\ad_a,\ad_c][\ad_d,\ad_b]) \\
&=-\Tr([\ad_a,\ad_d][\ad_b,\ad_c])-\frac{1}{2}\Tr([\ad_a,\ad_b][\ad_c,\ad_d])\, .
\end{align*}
Where in the last equality, we used trace invariance and Jacobi identity.

(4) If $\Tr\big((\ad_a)^4\big)=\alpha\,\Tr\big((\ad_a)^2\big)^2$, we have that
\begin{align}
&\Tr\big((\ad_{a_1+a_2})^4\big)=\alpha\,\Tr\big((\ad_{a_1+a_2})^2\big)^2 \, , \label{eq4.1 4}\\
&\Tr\big((\ad_{a_1+a_2+a_3})^4\big)=\alpha\,\Tr\big((\ad_{a_1+a_2+a_3})^2\big)^2 \, , \label{eq4.2 4}\\
&\Tr\big((\ad_{a+b+c+d})^4\big)=\alpha\,\Tr\big((\ad_{a+b+c+d})^2\big)^2  \, . \label{eq4.3 4}
\end{align}
Then expanding \eqref{eq4.1 4} and \eqref{eq4.2 4} (using the linearity of $\ad:\lieg\rightarrow \lieg$) and substituting into \eqref{eq4.3 4}, we obtain the identity. The other direction follows from replacing $a=b=c=d$.
\end{proof}

Finally, we prove the main theorem of this work

\begin{proof}[Proof of Theorem \ref{thm4.1} ]

It follows directly by definition that the brackets are skew-symmetric. Then by sesquilinearity, we have that $R$ is a non-linear Lie conformal algebra if the Jacobi identity is satisfied for any triple  
\[(a,b,c)\in U\times U\times U\, .\]

 We proceed as follows.\\ 
 Step 1: For a triple $(a,b,c)$ we compute (1) $[a_{\lambda}[b_{\mu}c]]$, (2) $[b_{\mu}[a_{\lambda}c]]$ and (3) $[[a_{\lambda}b]_{\lambda+\mu}c]$. \\
 Step 2: Then we verify that $(1)=(2)+(3)$.\\

 The Jacobi identity for  \((J_a[1,0], J_b[0,1], J_c[0,0])\)

The term (1) gives us 
    \begin{align*}
        [J_a&[1,0]_\lambda [J_b[0,1]_\mu J_c[0,0]]] = [J_a[1,0]_\lambda -\beta(b,c)(\mu+\partial) E[0,1])+J_{[b,c]}[0,1]]\\
        &=J_{[a,[b,c]]}[1,1]-\beta^2(b,c)(\mu+\lambda+\partial)I_a[0,0]-\beta(a,[b,c])((2\lambda+\partial)E[1,1]+F[0,0])\\
        & +D(2\lambda+\partial )I_{[a,[b,c]]}[0,0]+C  \left( J_{[a,e_i]}[0,0]I_{[[b,c],e^i]}[0,0] + J_{[[b,c],e_i]}[0,0]I_{[a,e^i]}[0,0]\right).
    \end{align*}
    
Analogously, the term (2) is
    \begin{align*}
        [J_b&[0,1]_\mu [J_a[1,0]_\lambda J_c[0,0]]] =[J_b[0,1]_\mu -\beta(a,c)(\lambda+\partial) E[1,0]+J_{[a,c]}[1,0]]\\
        &=J_{[b,[a,c]]}[1,1]+\beta^2(a,c)(\mu+\lambda+\partial)I_b[0,0]-\beta(b,[a,c])((2\mu+\partial)E[1,1]-F[0,0])\\
        &-D(2\mu+\partial ) I_{[b,[a,c]]}[0,0] -C \left( J_{[[a,c],e_i]}[0,0]I_{[b,e^i]}[0,0] + J_{[b,e_i]}[0,0]I_{[[a,c],e^i]}[0,0]\right).
    \end{align*}
    
And $(3)$ is given by 
\begin{align*}
    &[[J_a[1,0]_\lambda J_b[0,1]]_{\lambda+\mu}J_c[0,0]] =[J_{[a,b]}[1,1]-\beta(a,b)((2\lambda+\partial) E[1,1]+F[0,0])\\
    & \,+D(2\lambda +\partial)I_{[a,b]}[0,0]+C(J_{[a,e_i]}[0,0]I_{[b,e^i]}[0,0]+J_{[b,e_i]}[0,0]I_{[a,e^i]}[0,0])_{\lambda+\mu}J_c[0,0]]\\
    &\hspace{.4cm}=J_{[[a,b],c]}[1,1]+\beta^2(a,b)(\lambda+\mu+\partial)I_{c}[0,0]-\beta ([a,b],c)2(\lambda+\mu+\partial)E[1,1]\\
    &\hspace{.4cm} +D\left({\mu}-{\lambda}\right) I_{[c,[a,b]]}[0,0]+C(\lambda+\mu+\partial)(I_{[[c,[a,e_i]],[b,e^i]]}+I_{[[c,[b,e_i]],[a,e^i]]})\\
    &\hspace{.4cm} -C \left( J_{[c,[a,e_i]]}[0,0]I_{[b,e^i]}[0,0] + J_{[a,e_i]}[0,0]I_{[c,[b,e^i]]}[0,0]\right)\\
    &\hspace{.4cm} -C \left( J_{[c,[b,e_i]]}[0,0]I_{[a,e^i]}[0,0] + J_{[b,e_i]}[0,0]I_{[c,[a,e^i]]}[0,0]\right).
\end{align*}
where we used the identity \eqref{eq2.0}. Now we will group corresponding terms and simplify the expressions. 

The equality $(1)=(2)+(3)$ for the terms with coefficient $J[1,1]$ cancel from the Jacobi identity.

The equality $(1)=(2)+(3)$ for the terms with coefficient $E$, $F$ follows from 
\begin{align*}
& -\beta (a,[b,c])((2\lambda+\partial)E[1,1]+F[0,0])=\\
 &\quad -\beta (b,[a,c])((2\mu+\partial)E[1,1]-F[0,0]) -\beta ([a,b],c)2(\lambda+\mu+\partial)E[1,1]. 
\end{align*}

The equality $(1)=(2)+(3)$ for the terms with coefficient $JI$ requires proving the following identity 
\begin{align*}
    &C \left( J_{[a,e_i]}[0,0]I_{[[b,c],e^i]}[0,0] + J_{[[b,c],e_i]}[0,0]I_{[a,e^i]}[0,0]\right) \\
    &=-C  \left( J_{[[a,c],e_i]}[0,0]I_{[b,e^i]}[0,0] + J_{[b,e_i]}[0,0]I_{[[a,c],e^i]}[0,0]\right) \\
    &-C \left( J_{[c,[b,e_i]]}[0,0]I_{[a,e^i]}[0,0] + J_{[b,e_i]}[0,0]I_{[c,[a,e^i]]}[0,0]\right) \\
    & -C \left( J_{[c,[a,e_i]]}[0,0]I_{[b,e^i]}[0,0] + J_{[a,e_i]}[0,0]I_{[c,[b,e^i]]}[0,0]\right)\, . 
\end{align*}
From the Jacobi identity the equality above is equivalent to
\begin{align*}
    &J_{[a,e_i]}[0,0]I_{[b,[e^i,c]]}[0,0] + J_{[b,[e_i,c]]}[0,0]I_{[a,e^i]}[0,0] \\
    &+J_{[a,[e_i,c]]}[0,0]I_{[b,e^i]}[0,0] + J_{[b,e_i]}[0,0]I_{[a,[e^i,c]]}[0,0]=0\, .
\end{align*}
Hence, for all $x,y\in \lieg$ the coefficients of $J_x[0,0]I_{y}[0,0]$ satisfy   
\begin{align*}
    (x,[a,e_i])&(y,[b,[e^i,c]])+ (x,[b,[e_i,c]])(y,[a,e^i]) \\
    +&(x,[a,[e_i,c]])(y,[b,e^i]) + (x,[b,e_i])(y,[a,[e^i,c]])=0\, .
\end{align*}
And using $(e_i, *)(e^i, *)=(*,*)$ we obtain
\begin{align*}
    &([x,a],[c,[y,b]])+ ([c,[x,b]],[y,a]) +([c,[x,a]],[y,b]) + ([x,b],[c,[y,a]])=0\, ,
\end{align*}
which is satisfied by bi-invariance.  \\
 
Finally, the equality $(1)=(2)+(3)$ for the terms with coefficient $I$ requires proving the following identity 
\begin{align*}
    -\beta^2(b,c)&(\mu+\lambda+\partial)I_a[0,0] +D(2\lambda+\partial)I_{[a,[b,c]]}[0,0] = \beta^2(a,c)(\mu+\lambda+\partial)I_b[0,0] \\
    &-D(2\mu+\partial) I_{[b,[a,c]]}[0,0] +\beta^2(a,b)(\lambda+\mu+\partial)I_{c}[0,0] \\
    &+D({\mu}-{\lambda}) I_{[c,[a,b]]}[0,0] +C (\lambda+\mu+\partial)(I_{[[c,[a,e_i]],[b,e^i]]}+I_{[[c,[b,e_i]],[a,e^i]]})\, .
\end{align*}
Applying the Jacobi identity to $I_{[b,[a,c]]}[0,0]$ and grouping terms we obtain 
\begin{align*}
    &2D(\lambda+\mu+\partial)I_{[a,[b,c]]}[0,0] +D({\lambda}+{\mu}+\partial) I_{[c,[a,b]]}[0,0] \\
    = &\beta^2(\mu+\lambda+\partial)((b,c)I_a[0,0] + (a,c)I_b[0,0] +(a,b)I_{c}[0,0]) \\
       &+C (\lambda+\mu+\partial)(I_{[[c,[a,e_i]],[b,e^i]]}+I_{[[c,[b,e_i]],[a,e^i]]}) \, .
\end{align*}
Hence, for all $x\in \lieg$ the coefficients of $I_x[0,0]$ satisfy   
\begin{align*}
    2D(x,[a,[b,c]]) &+D (x,[c,[a,b]]) =\beta^2((b,c)(x,a) + (a,c)(x,b) +(a,b)({c},x)) \\
    &+C ((x,[[c,[a,e_i]],[b,e^i]])+{(x,[[c,[b,e_i]],[a,e^i]])})\, .
\end{align*}
Applying \eqref{fq2.1} and Lemma \ref{lem4.2} (\ref{lem4.2 i)}) we obtain
\begin{align*}
    &\frac{D}{h^{\vee}}\Tr([\ad_x,\ad_a][\ad_b,\ad_c]) +\frac{D}{2h^{\vee}} \Tr([\ad_x,\ad_c][\ad_a,\ad_b]) \\
    &=\beta^2((b,c)(x,a) + (a,c)(x,b) +(a,b)({c},x)) -C (\Tr(\ad_b\ad_a\ad_c\ad_{x})+\Tr(\ad_a\ad_b\ad_c\ad_{x}))
\end{align*}
Applying Lemmas \ref{lem4.2} (\ref{lem4.2 ii)}) and (\ref{lem4.2 iii)}) we obtain 
 \begin{align*}
&(C+\frac{3D}{h^\vee}) \big( \operatorname{Tr}(\ad_a \ad_b \ad_c \ad_x) + \operatorname{Tr}(\ad_b \ad_a \ad_c \ad_x) \big)\\
&=\beta^2((b,c)(x,a) + (a,c)(x,b) +(a,b)({c},x))\\
&+ \frac{2D}{h^\vee} \big( \operatorname{Tr}(\ad_a \ad_b \ad_c \ad_x) + \operatorname{Tr}(\ad_a \ad_c \ad_x \ad_b) + \operatorname{Tr}(\ad_a \ad_x \ad_b \ad_c) \big)\, .
 \end{align*}

 Now, the elements $e,(243),(423)$ are representatives of the equivalence classes of $S_4/D_4$. Therefore, the coefficient of $\frac{2D}{h^\vee}$ is $S_4$ symmetric using Lemma \ref{lem4.2} (\ref{lem4.2 ii)}). Also, the coefficient of $\beta^2$ is $S_4$ symmetric. On the other hand, the coefficient of $(C+\frac{3D}{h^\vee})$ is not $S_4$ symmetric, in particular it is not preserved by the permutation $(23)$\footnote{By contradiction, consider the permutation $(23)$ and $a=c$, $b=d$. Then the identity 
 \[\operatorname{Tr}(\ad_a \ad_b \ad_a \ad_b) + \operatorname{Tr}(\ad_b \ad_a \ad_a \ad_b)=\operatorname{Tr}(\ad_a \ad_a \ad_b \ad_b) + \operatorname{Tr}(\ad_b \ad_a \ad_a \ad_b)\]
implies that $\operatorname{Tr}(\ad_{[a,b]} \ad_a \ad_b) = 0$ for all $a,b\in \lieg$. This is a contradiction, for example, by taking \(a = e\) and \(b = f\) in an \(\mathfrak{sl}_2\)-triple inside \(\mathfrak{g}\).}. Hence, subtracting  the expression above with the same expression exchanging $b$ and $c$ we obtain

 \begin{align*}
&(C+\frac{3D}{h^\vee})=0\\
&\beta^2((b,c)(x,a) + (a,c)(x,b) +(a,b)({c},x))\\
&=\frac{-2D}{h^\vee} \big( \operatorname{Tr}(\ad_a \ad_b \ad_c \ad_x) + \operatorname{Tr}(\ad_a \ad_c \ad_x \ad_b) + \operatorname{Tr}(\ad_a \ad_x \ad_b \ad_c) \big)
 \end{align*}
Finally from Lemma \ref{lem4.2} (\ref{lem4.2 iv)}) we have  
\[D=\frac{-\beta^2}{8h^{\vee}\alpha}\, ,  \qquad C=\frac{3\beta^2}{8(h^{\vee})^2\alpha}\, .\]

\end{proof}

Finally, using the previous Theorem \ref{thm4.1} and the following Lemma we obtain the Theorem \ref{theorem: 1.1}.
\begin{lemma}
    There exists an $\alpha\in \CC$ such that 
    \begin{equation}\label{eq:trace-identity}
        \Tr((\ad_a)^4) = \alpha(\Tr((\ad_a)^2))^2
    \end{equation}
    if and only if $\lieg \in \{A_1, A_2, D_4,  E_6, E_7, E_8, F_4, G_2\}$. Further, 
    \begin{equation}\label{eq:alpha-expression}
        \alpha=\frac{5}{2(2+\dim \lieg)}.
    \end{equation}
\end{lemma}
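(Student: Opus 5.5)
The trace identity \eqref{eq:trace-identity} says that the degree-four $\operatorname{ad}$-invariant polynomial $a\mapsto \Tr((\ad_a)^4)$ is proportional to the square of the quadratic Casimir. I would organize the proof around Chevalley's theorem. It identifies $S(\lieg^*)^{\lieg}$ with $S(\lieh^*)^W$ by restriction to a Cartan subalgebra $\lieh$, so the polynomial invariants of $\lieg$ are generated in degrees equal to the fundamental degrees $d_1,\dots,d_r$ of the Weyl group $W$. The space of degree-four invariants is therefore one-dimensional exactly when no fundamental degree equals $4$, and in that case it is spanned by $(\Tr\ad^2)^2$.

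The plan has three steps, in order.

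\emph{Step 1 (sufficiency).} I would go through the classification and read off the degrees. For $A_1$ the degree is $2$. For $A_2$ they are $2,3$. For $G_2$ they are $2,6$. For $F_4$ they are $2,6,8,12$. For $E_6$ they are $2,5,6,8,9,12$. For $E_7$ they are $2,6,8,10,12,14,18$. For $E_8$ they are $2,8,12,14,18,20,24,30$. None of these lists contains $4$, so for these algebras the degree-four invariants are $\C(\Tr\ad^2)^2$, and $\alpha$ exists.

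\emph{Step 2 (the $D_4$ case and necessity).} The remaining algebras are $A_n$ ($n\ge3$), $B_n$ ($n\ge2$), $C_n$ ($n\ge3$), $D_n$ ($n\ge4$). All of these have $4$ as a degree, so the degree-four invariants form a two-dimensional space spanned by $p_2^2$ and a genuinely new quartic invariant. Here I would restrict to $\lieh$ and compute $\Tr((\ad_h)^4)=\sum_{\text{roots}}\alpha(h)^4$ explicitly. For type $A_{n-1}$ this is $\sum_{i\ne j}(x_i-x_j)^4=2n\,p_4+6p_2^2$ on $\sum x_i=0$, which is not proportional to $p_2^2$ for $n\ge4$. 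For $B_n$, $C_n$ and $D_n$ I would write the analogous sums in terms of $p_4=\sum x_i^4$ and $p_2^2$. In $D_n$ the roots $\pm x_i\pm x_j$ give $\sum_{i<j}4\bigl((x_i+x_j)^4+(x_i-x_j)^4\bigr)/2$, whose $p_4$-coefficient is proportional to $(n-8)$. It therefore vanishes exactly when $n=8$?

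This suggests a subtlety I must check carefully. The correct statement is that the $p_4$ coefficient vanishes for $D_n$ with $2(n-2)-12=0$, i.e. a specific $n$, and I expect this to reproduce $D_4$. For $D_4$, the exceptional behaviour can also be explained by triality: it forces the ad-quartic into the $S_3$-invariant part, which excludes the Pfaffian and the triality-moved $p_4$. I will settle the normalization by direct expansion. For $B_n$ and $C_n$, the extra short/long roots give nonzero $p_4$ coefficients for all relevant $n$. That the coefficient vanishes only for $D_4$ is the expected main obstacle, and getting it right is pure bookkeeping of the root sums.

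\emph{Step 3 (value of $\alpha$).} I would polarize as in Lemma \ref{lem4.2} (\ref{lem4.2 iv)}) and contract $a,b,c,d$ with dual bases, i.e. take $\sum_{i,j}$ with $a=e_i$, $b=e^i$, $c=e_j$, $d=e^j$ against the Killing form $K=\Tr\ad\ad=2h^\vee(\cdot,\cdot)$. The right side gives $\alpha(\dim\lieg)(\dim\lieg+2)$ in Killing-dual normalization. On the left, $\sum K$-contractions of the three traces give Casimir values: two of them equal $\Tr(\Omega^2)=\dim\lieg$ (with $\Omega$ acting as $1$ in the Killing normalization), and the middle one $\Tr(\ad_{e_i}\ad_{e_j}\ad_{e^i}\ad_{e^j})$ equals $\tfrac12\dim\lieg$. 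Hence $\tfrac52\dim\lieg=\alpha\dim\lieg(\dim\lieg+2)$, giving \eqref{eq:alpha-expression}. This is independent of the classification and serves as a consistency check on Step 2.
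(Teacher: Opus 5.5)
\textbf{The gap is in type $D$.} Your overall plan is the paper's: Chevalley degrees give sufficiency for $A_1,A_2,G_2,F_4,E_6,E_7,E_8$, and an explicit formula for the adjoint quartic in terms of $p_4$ and $p_2^2$ decides the classical types. The paper's table, written with $\Tr(a^4)$ in the defining representation, is the same computation. But the type-$D$ step is wrong as written and left unresolved. A $p_4$-coefficient proportional to $n-8$, or your alternative condition $2(n-2)-12=0$, both single out $D_8$. Taken at face value, that would put $D_8$ on the list and, without triality, remove $D_4$. The correct expansion is
\[
\sum_{\alpha\in\Phi(D_n)}\alpha(h)^4=2\sum_{i<j}\bigl[(x_i+x_j)^4+(x_i-x_j)^4\bigr]=4(n-1)p_4+12(p_2^2-p_4)=4(n-4)\,p_4+12\,p_2^2 ,
\]
so the coefficient vanishes if and only if $n=4$. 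Your $n-8$ is the vector-representation formula $N-8$ (using $\Tr_V(h^4)=2p_4$), with $N=2n$ mistaken for the rank.

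The same computation settles $B_n$ and $C_n$. Adding the roots $\pm x_i$, resp.\ $\pm 2x_i$, gives $(4n-14)p_4+12p_2^2$ for $B_n$ and $(4n+16)p_4+12p_2^2$ for $C_n$. These explicit coefficients, together with the linear independence of $p_4$ and $p_2^2$ on a Cartan subalgebra, are what necessity rests on. Without them, $D_n$ for $n\ge 5$ (in particular $D_8$) is not excluded. Also, for $D_4$ the quartic invariants form a three-dimensional space, spanned by $p_2^2$, $p_4$ and $x_1x_2x_3x_4$, not a two-dimensional one.

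\textbf{What works, and improves on the paper.} Your triality remark can be made rigorous. $\sum_{\alpha\in\Phi}\alpha(h)^4$ is invariant under $\operatorname{Aut}(\Phi)$, which for $D_4$ is $W(D_4)\rtimes S_3=W(F_4)$ with degrees $2,6,8,12$. So $D_4$ needs no computation at all. This gives nothing for $D_n$ with $n\ge 5$, however, since there $\operatorname{Aut}(\Phi)=W(B_n)$ has a quartic invariant.

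Step 3 replaces the paper's citation of Okubo for the exceptional types, plus a case-by-case check of $A_1,A_2,D_4$, with a uniform, classification-free derivation. Contracting the polarized identity of Lemma~\ref{lem4.2}, part (4), with Killing-dual bases does give $\tfrac52\dim\lieg=\alpha\dim\lieg(\dim\lieg+2)$. The one input you should justify is $\sum_i\ad_{e_i}\ad_x\ad_{e^i}=\tfrac12\ad_x$. It follows from $\sum_i\ad_{e_i}\ad_{e^i}=1$ together with $\sum_i\ad_{e_i}\ad_{[e^i,x]}=\tfrac12\ad_x$. The latter comes from invariance and symmetry of the Casimir tensor, which give $\sum_i[\ad_{e_i},\ad_{[e^i,x]}]=\ad_x$ and $\sum_i\ad_{e_i}\ad_{[e^i,x]}=-\sum_i\ad_{[e^i,x]}\ad_{e_i}$.
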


\begin{proof} 
For types $A_1, A_2, E_6, E_7, E_8, F_4$ and $G_2$, \eqref{eq:trace-identity} follows from the degrees of the fundamental invariants. And it was proved in \cite{O} that the identity \eqref{eq:alpha-expression} is satisfied for exceptional Lie algebras. Now, for classical types we have that 
\begin{table}[h!]
    \centering
    \begin{tabular}{c|c}
         $A_n$ & $\Tr(\ad_a^4) = 2(n+1)\Tr(a^4) + 6(\Tr(a^2))^2$  \\
         $B_n$ & $\Tr(\ad_a^4) = (2n-7)\Tr(a^4) + 3(\Tr(a^2))^2 $ \\
         $C_n$ & $\Tr(\ad_a^4) = 2(n+4)\Tr(a^4) + 3(\Tr(a^2))^2$ \\
         $D_n$ & $\Tr(\ad_a^4) = 2(n-4)\Tr(a^4) + 3(\Tr(a^2))^2$.
    \end{tabular}
\end{table}

Moreover, $\operatorname{Tr}(a^4)$ is an invariant generator, except for types $A_1$ and $A_2$. And, on $D_4$ the coefficient of $\Tr(a^4)$ is canceled. Hence, $A_1, A_2, D_4$ and the exceptional cases are all possible cases where \eqref{eq:trace-identity} is satisfied. Finally, it follows from a direct calculation that \eqref{eq:alpha-expression}  is also satisfied for $A_1, A_2, D_4$. 
\end{proof}

\bibliographystyle{amsalpha}

\end{document}